\newcommand{\hc}[1]{\hat{\vc{#1}}}
\def\pto{\stackrel{P}{\to}}
\newtheorem{assum}{Assumption}
\newcommand{\Appendix}{\appendix\def\thesection{Appendix~\Alph{section}}
	\def\thesubsection{\Alph{section}.\arabic{subsection}}
}
\makeatletter \renewcommand{\theequation}{\thesection.\arabic{equation}} \@addtoreset{equation}{section} \makeatother
\renewcommand{\vc}[1]{\ensuremath{\boldsymbol{#1}}}
\newcommand\independent{\protect\mathpalette{\protect\independenT}{\perp}}
\def\independenT#1#2{\mathrel{\rlap{$#1#2$}\mkern2mu{#1#2}}}
\newtheorem{theorem}{Theorem}
\newcommand{\blind}{1}
\begin{document}
	
	\vskip 3mm




\if1\blind
{
  {
  \noindent AN ASYMPTOTIC RESULT OF CONDITIONAL LOGISTIC REGRESSION ESTIMATOR
   }
} \fi

\if0\blind
{
  \bigskip
  \bigskip
  \bigskip
  \begin{center}
    {
	  \noindent AN ASYMPTOTIC RESULT OF CONDITIONAL LOGISTIC REGRESSION ESTIMATOR
    }
\end{center}
  \medskip
} \fi

\vskip 3mm

\vskip 5mm
\noindent Zhulin He and Yuyuan Ouyang

\noindent Center for Survey Statistics and Methodology and Department of Statistics

\noindent Iowa State University

\noindent Ames, Iowa \ 50011

\noindent zhulin.stat@gmail.com

\noindent School of Mathematical and Statistical Sciences

\noindent Clemson University

\noindent Clemson, South Carolina \ 29634

\noindent yuyuano@clemson.edu

\vskip 3mm
\noindent Key Words: ordinary logistic regression; conditional logistic regression; survey sampling; Cauchy's differentiation formula; method of steepest descent
\vskip 3mm

\bigskip
\noindent ABSTRACT

In cluster-specific studies, ordinary logistic regression and conditional logistic regression for binary outcomes provide maximum likelihood estimator (MLE) and conditional maximum likelihood estimator (CMLE), respectively. In this paper, we show that CMLE is approaching to MLE asymptotically when each individual data point is replicated infinitely many times. Our theoretical derivation is based on the observation that a term appearing in the conditional average log-likelihood function is the coefficient of a polynomial, and hence can be transformed to a complex integral by Cauchy's differentiation formula. The asymptotic analysis of the complex integral can then be performed using the classical method of steepest descent. 
Our result implies that CMLE can be biased if individual weights are multiplied with a constant, and that we should be cautious when assigning weights to cluster-specific studies. 

\vfill

\newpage

\section{Introduction}

\label{sec:mod_descr}

Consider a cluster-specific logistic model with $J$ clusters and $K_j$ individual data points in each cluster. The total number of individuals is
\begin{align}
	\label{eq:N}
	N:=\sum_{j=1}^{J}K_j.
\end{align}
We assume that there are $P$ individual level covariates, and denote the covariates and binary outcome of individual $k$ in cluster $j$ by $P$-dimensional vector $ \boldsymbol{X}_{j,k} = (X_{j,k,1},\cdots,X_{j,k,P})^T$ and binary scalar $ Y_{j,k} $, respectively.
The cluster-specific logistic model is set up as 
\begin{equation}\label{eq:model}
\text{logit}\{P(Y_{j,k}=1|\boldsymbol{X}_{j,k},b_j)\}=\boldsymbol{X}_{j,k}^T\boldsymbol{\beta}+b_j,\ \forall k = 1,\ldots,K_j,\ \forall j= 1,\ldots,J.
\end{equation}
Here, the parameters of interest are $ \boldsymbol{\beta} = (\beta_1,\cdots,\beta_P)^T\in\mathbb{R}^P $ and the nuisance parameter $ b_j $. Note that $b_j$ is a cluster-specific effect for the $ j $-th cluster. 
%
%
%
We make the cluster independence assumption in the logistic model \eqref{eq:model}.

\begin{assum}[Cluster independence]
	\label{as:indep}
	The clusters are independent of each other, i.e., 
	\begin{equation}\label{eq:cluster_indep}
	(\boldsymbol{Y}_j,\boldsymbol{X}_j,b_j)\independent (\boldsymbol{Y}_{j'},\boldsymbol{X}_{j'},b_{j'}) \quad \text{ for any }j\neq {j'}.
	\end{equation}
	Here $ \independent $ is the independence notation.
\end{assum}
The above assumption means that individuals from different clusters are independent of each other. Such assumption is consistent with our understanding of the data structure in the cluster-specific study designs. In the sequel we may refer to the above assumption as the ``cluster independence assumption''.

In the following sections, we review the ordinary and conditional logistic regression models, and then describe our problem of interest on the asymptotic property of conditional maximum likelihood estimators arising from survey sampling. We prove our main result in Section \ref{sec:results}. 
Detailed proof of a key proposition is described in Appendix \ref{sec:asym}.

\subsection{Ordinary and conditional logistic regression estimators}
\label{subsec:olr_mle}
The ordinary logistic regression model has the following average log-likelihood:
\begin{align}\label{eq:llhdMLEb}
\begin{aligned}
\overline{l}^o(\boldsymbol{\beta}, \boldsymbol{b}) 
= & \frac{1}{N}\sum_{j=1}^{J}\sum_{k=1}^{K_j}Y_{j,k}(\boldsymbol{X}_{j,k}^T\boldsymbol{\beta} + b_j) - \log\left[1+\exp(\boldsymbol{X}_{j,k}^T\boldsymbol{\beta} + b_j)\right].
\end{aligned}
\end{align}
Here recall from \eqref{eq:N} that we use $N:=\sum_{j=1}^{J}K_j$ to denote the total number of individuals. 
The estimating equations are
\begin{align}
	\label{eq:bjo}
	& \sum_{k=1}^{K_j}  \frac{\exp(\boldsymbol{X}_{j,k}^T\boldsymbol{\beta}+ b_j)}{1+ \exp(\boldsymbol{X}_{j,k}^T\boldsymbol{\beta} + b_j)} = \sum_{k=1}^{K_j}Y_{j,k},\ \forall j=1,\ldots,J,
	\\
	\label{eq:olr.ee}
	& \sum_{j=1}^{J}\sum_{k=1}^{K_j}\frac{\exp(\boldsymbol{X}_{j,k}^T\boldsymbol{\beta} +  b_{j})}{1+\exp(\boldsymbol{X}_{j,k}^T\boldsymbol{\beta} + b_{j})} \boldsymbol{X}_{j,k} = \sum_{j=1}^{J}\sum_{k=1}^{K_j}Y_{j,k}\boldsymbol{X}_{j,k}.
\end{align}
We assume that there exists a finite valued maximum likelihood estimator (MLE) $(\hat{\boldsymbol{\beta}}^o, \hat{b}^o)$. 
Note that for any finite-valued parameters $\vc{\beta}$ and $b$ we have $0<\exp(\boldsymbol{X}_{j,k}^T\boldsymbol{\beta}+ b_j)/[1+ \exp(\boldsymbol{X}_{j,k}^T\boldsymbol{\beta} + b_j)]<1$ in the left-hand-side of \eqref{eq:bjo}. Consequently, under our assumption that finite-valued MLE exists, we have $1\le \sum_{k=1}^{K_j}Y_{k}\le K_j$.

The average log-likelihood function $\overline{l}^o(\boldsymbol{\beta}, \boldsymbol{b})$ is concave. It is strictly concave under the following assumption:
\begin{assum}
	\label{as:fullrankX}
	All elements in $ \boldsymbol{X}_{j,k} $ are finite valued, and the $N\times (P+J)$ matrix consisting of row vectors $(\vc X_{j,k}^T, \vc e_j^T)^T$,  $k=1,\ldots,K_j$ and $j=1,\ldots,J$ has full column rank.
\end{assum}
\noindent In the sequel we may refer to the above assumption as the ``column rank assumption''. The strict concavity result under the above assumption is well known and can be verified by checking that the Hessian of function $\overline{l}^o(\boldsymbol{\beta}, \boldsymbol{b})$ is positive definite for any $(\boldsymbol{\beta}, \boldsymbol{b})$. Due to its strict concavity, there exists at most one maximizer of the function $\overline{l}^o(\boldsymbol{\beta}, \boldsymbol{b})$.

For our future discussion, we also adopt an equivalent description of the average log-likelihood function. Given any finite valued $\vc\beta$, observe that there exists unique $b_j$'s that satisfy the $J$ equations in \eqref{eq:bjo}, since the function $b_j\mapsto \sum_{k=1}^{K_j}{\exp(\boldsymbol{X}_{j,k}^T\boldsymbol{\beta}+ b_j)}/[{1+ \exp(\boldsymbol{X}_{j,k}^T\boldsymbol{\beta} + b_j)}]$ is strictly monotonically increasing. Therefore, for each cluster $j$ we can denote function $\tau_j(\vc\beta)$ to be the unique root to the $j$-th equation in the estimating equation \eqref{eq:bjo} with respect to the given $\vc\beta$. Consequently, solving MLE $(\hat{\boldsymbol{\beta}}^o, \hat{b}^o)$ by maximizing the average log-likelihood \eqref{eq:llhdMLEb} is equivalent to maximizing the following function:
\begin{align}
\label{eq:llhdMLE}
\begin{aligned}
l^o(\boldsymbol{\beta}) = & \frac{1}{N}\sum_{j=1}^{J}\sum_{k=1}^{K_j}Y_{j,k}(\boldsymbol{X}_{j,k}^T\boldsymbol{\beta} + \tau_j(\boldsymbol{\beta})) - \log\left[1+ \exp(\boldsymbol{X}_{j,k}^T\boldsymbol{\beta} + \tau_j(\boldsymbol{\beta}))\right].
\end{aligned}
\end{align}
The estimating equations become 
\begin{equation}\label{eq:olr.ee1}
\sum_{j=1}^{J}\sum_{k=1}^{K_j}Y_{j,k}\boldsymbol{X}_{j,k}-\sum_{j=1}^{J}\sum_{k=1}^{K_j}\frac{\exp\{\boldsymbol{X}_{j,k}^T\boldsymbol{\beta} +  \tau_j(\boldsymbol{\beta})\}}{1+\exp\{\boldsymbol{X}_{j,k}^T\boldsymbol{\beta} + \tau_j(\boldsymbol{\beta})\}} \boldsymbol{X}_{j,k} = \boldsymbol{0}.
\end{equation}
Since function $\overline{l}^o(\boldsymbol{\beta}, \boldsymbol{b})$ has at most a unique maximizer under Assumption \ref{as:fullrankX}, we can conclude that there also exists at most one unique maximizer of function $l^o(\boldsymbol{\beta})$ in \eqref{eq:llhdMLE}.

The CMLE of conditional logistic regression is obtained by maximizing the overall conditional likelihood composed of joint probabilities conditional on sufficient statistics. For model \eqref{eq:model}, the sufficient statistic for $ b_j $ in model \eqref{eq:model} is the outcome sum $ \sum_{k=1}^{K_j}Y_{j,k} $ in the $ j $-th cluster. Letting $ \boldsymbol{y_j}=(y_{j,1},y_{j,2},\cdots,y_{j,K_j})^T $ be the observed cluster level outcome in the $ j $-th cluster, the conditional joint probability  is
\begin{align}
\label{eq:clr.joint_prob}
\begin{aligned}
P\left(\boldsymbol{Y}_j=\boldsymbol{y}_j\Big|\sum_{k=1}^{K_j}Y_{j,k}=\sum_{k=1}^{K_j}y_{j,k}\right) 
= &
\dfrac{\prod_{k=1}^{K_j}P(Y_{j,k}=y_{j,k}|\boldsymbol{X}_{j,k},b_j)}{\sum_{\boldsymbol{r}\in \mathbb{V}_j} \prod_{k=1}^{K_j}P(Y_{j,k}=r_{j,k}|\boldsymbol{X}_{j,k},b_j)} \\
= & 
\dfrac{\prod_{k=1}^{K_j}\exp(y_{j,k}\boldsymbol{X}_{j,k}^T\boldsymbol{\beta})}{\sum_{\boldsymbol{r} \in \mathbb{V}_j}\prod_{k=1}^{K_j}\exp(r_{j,k}\boldsymbol{X}_{j,k}^T\boldsymbol{\beta})}.
\end{aligned}
\end{align}
Here 
$ \mathbb{V}_j = \Set{\vc r:=(r_{1},\cdots,r_{K_j})^T\in \{0,1\}^{K_j}|\sum_{k=1}^{K_j}r_{k}=\sum_{k=1}^{K_j}y_{j,k}} $ 
is the set of 
all possible permutations of outcomes in the $j$-th cluster. Note from the above equation that the cluster level conditional joint probability does not depend on $ b_j $. Also, when $\sum_{k=1}^{K_j}Y_{j,k}$ is either $0$ or $K_j$, the conditional joint probability is trivial: $ P(\boldsymbol{Y}_j=\boldsymbol{0}|\sum_{k=1}^{K_j}Y_{j,k}=0) = P(\boldsymbol{Y}_j=\boldsymbol{1}|\sum_{k=1}^{K_j}Y_{j,k}=K_j)=1 $. Under the assumption that $1\le \sum_{k=1}^{K_j}Y_{j,k}\le K_j-1$, the conditional joint probability in \eqref{eq:clr.joint_prob} can be considered as the probability of a multivariate Fisher's noncentral hypergeometric distribution, where the exponential form $ \exp(\boldsymbol{X}_{j,k}^T\boldsymbol{\beta}) $ is treated as a weight  \citep{fisher1935logic,cornfield1956statistical,agresti1992survey}. 

Based on the conditional joint probability in \eqref{eq:clr.joint_prob}, we obtain the corresponding overall 
average conditional log-likelihood 
\begin{align}
l^c(\boldsymbol{\beta}) = \frac{1}{N}\left\{\sum_{j=1}^{J}\sum_{k=1}^{K_j}Y_{j,k}\boldsymbol{X}_{j,k}^T\boldsymbol{\beta} - \sum_{j=1}^{J}\log\left[\sum_{ \boldsymbol{r}\in \mathbb{V}_j}\exp\left(\sum_{k=1}^{K_j}r_{k}\boldsymbol{X}_{j,k}^T\boldsymbol{\beta}\right)\right]\right\}.
\end{align}
The above function is concave since it is the sum of a linear function and the negative of a log-sum-exp function (see, e.g., \cite{boyd2004convex} for the convexity of log-sum-exp functions). 
A conditional maximum likelihood estimator (CMLE), denoted as $ \hat{\boldsymbol{\beta}}_c $, can be obtained by solving the estimating equations
\begin{equation}
\label{eq:clr.ee1}
\sum_{j=1}^{J}\sum_{k=1}^{K_j}Y_{j,k}\boldsymbol{X}_{j,k}-\sum_{j=1}^{J}\sum_{\substack{k=1\\\boldsymbol{r}^{*} \in \mathbb{V}_j}}^{K_j}\frac{\exp (\sum_{k=1}^{K_j}r^{*}_{k}\boldsymbol{X}_{j,k}^T\boldsymbol{\beta})}{\sum_{\boldsymbol{r} \in \mathbb{V}_j}\exp(\sum_{k=1}^{K_j}r_{k}\boldsymbol{X}_{j,k}^T\boldsymbol{\beta})}r^{*}_{k}\boldsymbol{X}_{j,k}=\boldsymbol{0}.
\end{equation}

We finish this subsection with a discussion on the relationship between MLE and CMLE. By comparing the estimating equations (\ref{eq:olr.ee1}) and (\ref{eq:clr.ee1}), we have
\begin{equation}\label{eq:clr.olr.relat}
\sum_{j=1}^{J}\sum_{k=1}^{K_j}\frac{\exp\{\boldsymbol{X}_{j,k}^T\boldsymbol{\hat{\beta}^o} + b_{j}(\boldsymbol{\hat{\beta}^o})\}}{1+\exp\{\boldsymbol{X}_{j,k}^T\boldsymbol{\hat{\beta}^o} + b_{j}(\boldsymbol{\hat{\beta}^o})\}} \boldsymbol{X}_{j,k}
= 
\sum_{j=1}^{J}\sum_{\substack{k=1\\\boldsymbol{r}^{*} \in \mathbb{V}_j}}^{K_j}\frac{\exp (\sum_{k=1}^{K_j}r^{*}_{k}\boldsymbol{X}_{j,k}^T\boldsymbol{\beta})}{\sum_{\boldsymbol{r} \in \mathbb{V}_j}\exp(\sum_{k=1}^{K_j}r_{k}\boldsymbol{X}_{j,k}^T\boldsymbol{\beta})}r^{*}_{k}\boldsymbol{X}_{j,k}.
\end{equation}
There are a few previous results in the literature that are special cases of the above relation.
Specifically, for a matched-pair design with no covariates other than the treatment/control information (i.e., $ P=1 $ and $K_j\equiv 2$), it has been proved by \citet[pp. 244-245]{andersen1980discrete} and \citet[pp. 493]{agresti2012categorical} that \eqref{eq:clr.olr.relat} becomes $ {\hat{\beta}^o}=2{\hat{\beta}^c} $. For a matched-pair design with more covariates (i.e., $ P>1 $ and $K_j\equiv 2$), in \cite{he2013equivalence} and \cite{he2012causal}, it is proved that \eqref{eq:clr.olr.relat} becomes $ \boldsymbol{\hat{\beta}^o}=2\boldsymbol{\hat{\beta}^c} $. For a $1:K$ matched treatment-control design ($ K>1 $) with no covariates other than the treatment/control information (i.e., $ P=1 $ and $K_j\equiv 1+K$), it is shown in \cite{he2018consistency} that \eqref{eq:clr.olr.relat} reduces to
\begin{equation}\label{eq:clr.olr.relat1}
\sum_{t=1}^{K}\frac{n_{t}}{1+\frac{t}{K-t+1}e^{\hat{\beta}^c}}	
=\sum_{t=1}^{K}\frac{n_{t}}{1+\frac{t-1}{2(K-t+1)}e^{\hat{\beta}^o}-\frac{K-t}{2(K-t+1)}+\frac{1						}{2(K-t+1)}\sqrt{\Delta_{K,t}(\hat{\beta}^o)}}, 
\end{equation}
where $ n_t$ is the number of clusters satisfying $ \sum_{k=1}^{K_j}Y_{j,k}=t $,
and $ \Delta_{K,t}(\hat{\beta}^o):=[(t-1)e^{\hat{\beta}^o}-(K-t)]^2+4t(K_j-t)e^{\hat{\beta}^o} $. 


\subsection{Problem of interest: MLE and CMLE in survey sampling}
\label{subsec:surveysamp}
Let us consider MLE and CMLE in survey sampling. If individuals in the data are sampled from a target population, we have to implement sampling weights in the estimating equations, since the estimating procedures are targeted at the larger population rather than the sample. 
One may understand the sampling weight structure as repeated observations at each data point \citep{he2013equivalence,he2018consistency}. 
Let us consider a specific sampling weight structure that is equivalent to $R$ replications of samples. 
After replication, in the $j$-th cluster we have $RK_j$ data points $(\vc X_{j,k}^{(t)}, Y_{j,k}^{(t)})$, $t=1,\ldots,R$ and $k=1,\ldots,K_j$ in which 
\begin{align}
	\label{eq:rep}
	{\vc X}_{j,k}^{(1)}=\cdots ={\vc X}_{j,k}^{(R)}={\vc X}_{j,k}\text{ and }Y_{j,k}^{(1)}=\cdots =Y_{j,k}^{(R)}=Y_{j,k},\ 
	\forall k,j.
\end{align}
The total number of data points is now $NR$, where recall from \eqref{eq:N} that $N=\sum_{j=1}^{J}K_j$.

Applying ordinary logistic regression to the replicated data points, by \eqref{eq:llhdMLEb} the average log-likelihood is
\begin{align}
	\overline{l}^o(\boldsymbol{\beta}, \boldsymbol{b}) 
	= & \frac{1}{NR}\sum_{j=1}^{J}\sum_{k=1}^{K_j}\sum_{t=1}^{R}Y_{j,k}^{(t)}\left(\left(\boldsymbol{X}_{j,k}^{(t)}\right)^T\boldsymbol{\beta} + b_j\right) - \log\left[1+\exp\left(\left(\boldsymbol{X}_{j,k}^{(t)}\right)^T\boldsymbol{\beta} + b_j\right)\right]
	\\
	= & \frac{1}{N}\sum_{j=1}^{J}\sum_{k=1}^{K_j}Y_{j,k}(\boldsymbol{X}_{j,k}^T\boldsymbol{\beta} + b_j) - \log\left[1+\exp(\boldsymbol{X}_{j,k}^T\boldsymbol{\beta} + b_j)\right].
\end{align}
Here the last equality is from \eqref{eq:rep}. Therefore, the above average log-likelihood remains the same as \eqref{eq:llhdMLE}.

For conditional logistic regression, the average conditional log-likelihood over the replicated data becomes
\begin{align}
l^c(\boldsymbol{\beta}) = &  \frac{1}{NR}\left\{\sum_{j=1}^{J}\sum_{k=1}^{K_j}\sum_{t=1}^{R}Y_{j,k}^{(t)}\left(\boldsymbol{X}_{j,k}^{(t)}\right)^T\boldsymbol{\beta} - \sum_{j=1}^{J}\log\left[\sum_{ \boldsymbol{r}\in \mathbb{V}_j}\exp\left(\sum_{k=1}^{K_j}\sum_{t=1}^{R}r_{k}^{(t)}\left(\boldsymbol{X}_{j,k}^{(t)}\right)^T\boldsymbol{\beta}\right)\right]\right\}
\\
= & \frac{1}{NR}\left\{R\sum_{j=1}^{J}\sum_{k=1}^{K_j}Y_{j,k}\boldsymbol{X}_{j,k}^T\boldsymbol{\beta} - \sum_{j=1}^{J}\log\left[\sum_{ \boldsymbol{r}\in \mathbb{V}_j}\exp\left(\sum_{k=1}^{K_j}\left(\sum_{t=1}^{R}r_{k}^{(t)}\right)\boldsymbol{X}_{j,k}^T\boldsymbol{\beta}\right)\right]\right\}.
\end{align}
Here we use \eqref{eq:rep} to obtain the last equality. The set $\mathbb{V}_j$ now is
\begin{align}
& \mathbb{V}_j 
\\
= & \Set{\vc r:=\left(r_{1}^{(1)},r_{1}^{(2)},\cdots, r_{1}^{(R)}, \ldots, r_{K_j}^{(1)},r_{K_j}^{(2)},\cdots,r_{K_j}^{(R)}\right)\in \{0,1\}^{RK_j}|\sum_{k=1}^{K_j}\sum_{t=1}^{R}r_{k}^{(t)}=R\sum_{k=1}^{K_j}y_{j,k}}.
\end{align}
Let us fix a specific cluster $j$. Introducing $r_{k}:=\sum_{t=1}^{R}r_{k}^{(t)}$ we have $r_{k}\in \{0,1,\ldots,R\}$ and $\sum_{k=1}^{K_j}r_{k} = R\sum_{k=1}^{K_j}Y_{k,j}$. Moreover, observe that for any $r_{k}$, there are in total $R\choose r_{k}$ combinations of binary values $ r_{K_j}^{(1)},r_{K_j}^{(2)},\cdots,r_{K_j}^{(R)}$ such that $\sum_{t=1}^{R}r_{k}^{(t)}=r_{k}$. Counting all the combinations when performing the sum over $\vc r\in \mathbb{V}_j$, we have
\begin{align}
& \sum_{ \boldsymbol{r}\in \mathbb{V}_j}\exp\left(\sum_{k=1}^{K_j}\left(\sum_{t=1}^{R}r_{k}^{(t)}\right)\boldsymbol{X}_{j,k}^T\boldsymbol{\beta}\right) 
\\
= & \sum_{\substack{r_{1},\ldots,r_{K_j}\in\{0,1,\ldots,R\}\\\sum_{s=1}^{K_j}r_{s} = R\sum_{s=1}^{K_j}y_{j,s}}}{R\choose r_{1}}\ldots{R\choose r_{K_j}}\exp\left(\sum_{k=1}^{K_j}\left(\sum_{t=1}^{R}r_{k}^{(t)}\right)\boldsymbol{X}_{j,k}^T\boldsymbol{\beta}\right).
\end{align}
Applying the above result, we obtain the following description of conditional average log-likelihood over $R$ replications of data points:
\begin{align}\label{eq:llhdCMLE}
l^c_R(\boldsymbol{\beta}) = \frac{1}{RN}\left[R\sum_{j=1}^{J}\sum_{k=1}^{K_j}Y_{j,k}\boldsymbol{X}_{j,k}^T\boldsymbol{\beta} - \sum_{j=1}^{J}\log g_{j,R,\sum_{k=1}^{K_j}Y_{j,k}}(\boldsymbol{\beta})\right],
\end{align}
where
\begin{align}\label{eq:g}
g_{j,R,T}(\boldsymbol{\beta}):=\sum_{\substack{r_1,\ldots,r_{K_j}\in\{0,1,\ldots,R\}\\ \sum_{s=1}^{K}r_s=RT}} {R\choose r_1}\ldots{R\choose r_{K_j}}\exp\left(\sum_{k=1}^{K_j}r_k\boldsymbol{X}_{j,k}^T\boldsymbol{\beta}\right).
\end{align}
Clearly, when we change the number of replications $R$, the conditional average log-likelihood function $l^c_R(\vc\beta)$ and the associated estimation equation will also change accordingly. The main goal of this paper is answer the following research question:

{\bf Research questions: } What is the asymptotic property of the average log-likelihood $l^c_R(\vc\beta)$ when $R\to\infty$? What is the relation between the CMLE $\hc{\beta}^c_R$ and MLE $\hc{\beta}^o$ when $R\to\infty$?

\subsection{A motivating example}\label{subsec:simu}
In this section, we conduct a simulation study to demonstrate how the aforementioned replication $R$ may affect the performance of the MLE and CMLE.
Our example is based on the following logistic model:
\begin{equation}\label{eq:simu_model}
\text{logit}\{P(Y_{j,k}=1|\boldsymbol{X}_{j,k},b_j)\}=b_j+X_{j,k,1}\beta_1+X_{j,k,2}\beta_2.
\end{equation}
In this model, there are two covariates for the $k$-th individual in the $j$-th cluster. The number of individuals in any $j$-th cluster is set to $K_j\equiv K$. For any $j$ and $k$, the first covariate $ X_{j,k,1} $ describes the treatment/control assignment. We set $ X_{j,1,1}=1$ and $X_{j,2,1}=\cdots=X_{j,K_j,1}=0 $, which indicates a $1:(K-1)$ matched treatment-control design: the first individual in each cluster always receives the treatment and the rest always receive control. The second covariate $ X_{j,k,2} $ is randomly generated through an i.i.d. standard normal distribution. For any $j$, we set the cluster-specific effect in the $j$-th cluster to $ b_j=\delta_j -5\bar{X}_{j,\cdot,1}+3\bar{X}_{j,\cdot,2}$, where $ \delta_j \sim N(0,1) $ and $ \bar{X}_{j,\cdot,1} $ and $ \bar{X}_{j,\cdot,2} $ are the means of the first and second covariates in the $ j $-th cluster respectively. The value of the true parameter is set to $ (\beta_1,\beta_2)=(0.5, 0.8) $. Each simulation repeats 10,000 times. 

In the simulation, we set $ K_j\equiv K = 3 $ and $J=100$ to indicate a cluster-specific $1 : 2$ matched treatment-control design with $100$ clusters in each replication of simulation. 
As discussed in Section \ref{subsec:surveysamp}, the MLE is invariant with respect to the number of replications $ R $;
the estimated value of the MLE is $ (\hat{\beta}^o_1, \hat{\beta}^o_2)=(0.801, 1.283) $ with estimated variance = (0.383, 0.265). The CMLEs with respect to different numbers of replications $R$ are listed in Table \ref{tab:simu_R}.
\begin{table}[h]
	\centering
	\begin{tabular}{rcc} \hline
		& CMLE & var. \\
		$ R $   & ($\hat{\beta}^c_1, \hat{\beta}^c_2$) & of CMLE  \\ \hline
		1   & ({0.504, 0.812}) & ({0.238, 0.158})  \\
		2   & (0.630, 1.010) & (0.297, 0.194)    \\
		3   & (0.683, 1.094) & (0.323, 0.212)    \\
		4   & (0.711, 1.140) & (0.337, 0.224)    \\
		5   & (0.729, 1.168) & (0.346, 0.231)    \\
		10  & (0.765, 1.225) & (0.364, 0.248)    \\
		15  & (0.777, 1.245) & (0.371, 0.254)    \\
		20  & (0.783, 1.254) & (0.374, 0.257)    \\
		50  & (0.794, 1.271) & (0.379, 0.262)    \\
		{80} & ({0.796, 1.276}) & ({0.381, 0.263})    \\ 
		\hline
	\end{tabular}\\
	\caption{\label{tab:simu_R}Simulation study to investigate asymptotic property of the CMLE with respect to the number of replications $ R $. The true parameter is $ (\beta_1,\beta_2)=(0.5, 0.8) $. The MLE for all versions of $R$ in the simulation is $ (\hat{\beta}^o_1, \hat{\beta}^o_2)=(0.801, 1.283) $ with estimated variance = (0.383, 0.265).}
\end{table}	

A few remarks are in place. 
First, the simulation result suggests that the MLE is biased. 
Second, comparing all CMLEs with respect to the number of replications $R$, the simulation result for the CMLE with $ R=1 $ has the smallest bias, and the bias increases as $ R $ increases. 
Finally, the difference between the MLE and CMLE is becoming smaller as $ R $ increases. In fact, we observe from this simulation an interesting behavior of the CMLE with respect to different values of $R$: when $R=1$, the value of the CMLE is far from that of the MLE, and the bias of the CMLE is small; as $R$ increases, the value of the CMLE is converging to that of the MLE and is showing significantly more bias.

\section{Main result}\label{sec:results}

Our goal in this paper is to show that the CMLE converges to the MLE under the cluster independence and column rank assumptions:
\begin{theorem}\label{thm:r}
	For the cluster-specific logistic model described in (\ref{eq:model}), let $ \hat{\boldsymbol{\beta}}^c(R) $ and $\hc{\beta}^o$ denote the CMLE with number of replications $ R $ and the MLE respectively. Under Assumptions \ref{as:indep} and \ref{as:fullrankX}, 
	for any fixed $J$ and $K_j$'s, if the CMLEs $ \hat{\boldsymbol{\beta}}^c(R) $ and MLE $ \hat{\boldsymbol{\beta}}^o $ exist, we have that $ \hat{\boldsymbol{\beta}}^c(R) $ approaches $ \hat{\boldsymbol{\beta}}^o $ as $ R $ approaches infinity. 
\end{theorem}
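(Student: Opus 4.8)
The plan is to prove the theorem in two stages. First, I would show that for every fixed $\boldsymbol{\beta}$ the conditional average log-likelihood $l^c_R(\boldsymbol{\beta})$ of \eqref{eq:llhdCMLE} converges, as $R\to\infty$, to the profiled ordinary log-likelihood $l^o(\boldsymbol{\beta})$ of \eqref{eq:llhdMLE}; second, I would upgrade this to convergence of the maximizers by a standard argmax argument for concave functions. Throughout I restrict attention to the informative clusters $1\le T_j:=\sum_{k=1}^{K_j}Y_{j,k}\le K_j-1$, since a cluster with $T_j\in\{0,K_j\}$ contributes the constant $0$ to both $l^c_R$ and $l^o$ and may be discarded.

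For the first stage, the starting observation --- flagged in the abstract --- is that $g_{j,R,T}(\boldsymbol{\beta})$ in \eqref{eq:g} is exactly the coefficient of $z^{RT}$ in the polynomial $\prod_{k=1}^{K_j}\bigl(1+ze^{\boldsymbol{X}_{j,k}^T\boldsymbol{\beta}}\bigr)^R$. Extracting that coefficient with Cauchy's differentiation formula gives, for any contour radius $\rho>0$,
\[
g_{j,R,T}(\boldsymbol{\beta}) \;=\; \frac{1}{2\pi i}\oint_{|z|=\rho}\frac{1}{z}\,\exp\!\bigl(R\,h_{j,T}(z)\bigr)\,dz,\qquad h_{j,T}(z):=\sum_{k=1}^{K_j}\log\!\bigl(1+ze^{\boldsymbol{X}_{j,k}^T\boldsymbol{\beta}}\bigr)-T\log z .
\]
I would then run the classical method of steepest descent on this integral. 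The saddle equation $h_{j,T}'(z)=0$ reads $\sum_{k}z e^{\boldsymbol{X}_{j,k}^T\boldsymbol{\beta}}/(1+z e^{\boldsymbol{X}_{j,k}^T\boldsymbol{\beta}})=T$, which has a unique positive real root, and --- comparing with the defining equation \eqref{eq:bjo} of $\tau_j$ --- that root is precisely $z_0=z_0(\boldsymbol{\beta})=e^{\tau_j(\boldsymbol{\beta})}$. Choosing $\rho=z_0$ and parametrizing $z=z_0e^{i\theta}$, the strict triangle inequality $|1+z_0e^{i\theta}e^{\boldsymbol{X}_{j,k}^T\boldsymbol{\beta}}|<1+z_0e^{\boldsymbol{X}_{j,k}^T\boldsymbol{\beta}}$ for $\theta\neq0$ shows $\operatorname{Re} h_{j,T}(z_0e^{i\theta})<h_{j,T}(z_0)$ away from $\theta=0$, so the circle is a legitimate descent contour; a one-line computation gives $z_0^2 h_{j,T}''(z_0)=\sum_k z_0e^{\boldsymbol{X}_{j,k}^T\boldsymbol{\beta}}/(1+z_0e^{\boldsymbol{X}_{j,k}^T\boldsymbol{\beta}})^2>0$. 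Laplace's method applied to $\tfrac1{2\pi}\int_{-\pi}^{\pi}e^{Rh_{j,T}(z_0e^{i\theta})}\,d\theta$ then yields
\[
g_{j,R,T}(\boldsymbol{\beta}) \;=\; \frac{e^{R\,h_{j,T}(z_0)}}{\sqrt{2\pi R\,z_0^2 h_{j,T}''(z_0)}}\,\bigl(1+o(1)\bigr),\qquad\text{hence}\qquad \frac1R\log g_{j,R,T}(\boldsymbol{\beta}) \;=\; h_{j,T}\!\bigl(z_0(\boldsymbol{\beta})\bigr)+O\!\Bigl(\tfrac{\log R}{R}\Bigr).
\]
I would isolate this last estimate as a separate proposition, its proof being the technical heart of the paper.

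Plugging the estimate into \eqref{eq:llhdCMLE}, and using the elementary identity --- obtained by substituting $z_0^{(j)}=e^{\tau_j(\boldsymbol{\beta})}$ and $T_j=\sum_k Y_{j,k}$ into the definition of $h_{j,T}$ --- that
\[
\sum_{j=1}^{J}\sum_{k=1}^{K_j}Y_{j,k}\boldsymbol{X}_{j,k}^T\boldsymbol{\beta}\;-\;\sum_{j=1}^{J}h_{j,T_j}\!\bigl(z_0^{(j)}(\boldsymbol{\beta})\bigr)\;=\;N\,l^o(\boldsymbol{\beta}),
\]
one obtains $l^c_R(\boldsymbol{\beta})=l^o(\boldsymbol{\beta})+O(R^{-1}\log R)$ for each fixed $\boldsymbol{\beta}$. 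Since $l^c_R$ and $l^o$ are concave, this pointwise convergence automatically upgrades to locally uniform convergence on $\mathbb{R}^P$. For the second stage, fix a closed ball $B$ centered at $\hat{\boldsymbol{\beta}}^o$; strict concavity of $l^o$ (Assumption \ref{as:fullrankX}) gives $\sup_{\partial B}l^o<l^o(\hat{\boldsymbol{\beta}}^o)$, so for all large $R$ the concave function $l^c_R$ takes at the interior point $\hat{\boldsymbol{\beta}}^o$ a value exceeding its supremum over $\partial B$, which forces every maximizer $\hat{\boldsymbol{\beta}}^c(R)$ to lie in $B$. Local uniform convergence on $B$ then gives $l^o(\hat{\boldsymbol{\beta}}^c(R))\to\max_{\boldsymbol{\beta}} l^o(\boldsymbol{\beta})$, and since $l^o$ is strictly concave with unique maximizer $\hat{\boldsymbol{\beta}}^o$, this yields $\hat{\boldsymbol{\beta}}^c(R)\to\hat{\boldsymbol{\beta}}^o$. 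Assumption \ref{as:indep} enters only to justify that the overall conditional likelihood factorizes over clusters, which is what produces the cluster-wise form of $l^c_R$ and of $g_{j,R,T}$.

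The main obstacle is the saddle-point proposition. One must justify deforming the coefficient-extraction contour to $|z|=z_0$, split $\tfrac1{2\pi}\int_{-\pi}^{\pi}e^{Rh_{j,T}(z_0e^{i\theta})}\,d\theta$ into a shrinking arc around $\theta=0$ (where the Gaussian/Laplace approximation applies with controlled remainder) and its complement (where the integrand is exponentially smaller than $e^{Rh_{j,T}(z_0)}$, by the strict triangle-inequality bound above), and then assemble the error terms. Pointwise control in $\boldsymbol{\beta}$ already suffices, thanks to the concavity upgrade used in the second stage, so one need not fuss over uniformity in $\boldsymbol{\beta}$; everything else --- the combinatorial identity for $g_{j,R,T}$, the identification $z_0=e^{\tau_j(\boldsymbol{\beta})}$, and the algebraic identity relating $\sum_j h_{j,T_j}(z_0^{(j)})$ to $l^o$ --- is routine bookkeeping.
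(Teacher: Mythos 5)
Your proposal is correct and follows essentially the same route as the paper: recognize $g_{j,R,T}$ as a polynomial coefficient, extract it by Cauchy's differentiation formula, run steepest descent at the saddle $z_0=e^{\tau_j(\boldsymbol{\beta})}$ to obtain $\tfrac1R\log g_{j,R,T}(\boldsymbol{\beta})\to h_{j,T}(z_0)$, identify the limit with $l^o(\boldsymbol{\beta})$, and upgrade pointwise convergence of the concave functions $l^c_R$ to convergence of their maximizers. The only cosmetic differences are your equivalent contour parametrization (coefficient of $z^{RT}$ in $\prod_k\bigl(1+z e^{\boldsymbol{X}_{j,k}^T\boldsymbol{\beta}}\bigr)^R$ rather than of $z^{R(K-T)}$ in $\prod_k\bigl(z+e^{\boldsymbol{X}_{j,k}^T\boldsymbol{\beta}}\bigr)^R$, so your radius is $e^{\tau_j}$ where the paper's is $e^{-\tau_j}$) and that you prove the argmax-convergence step directly via a concavity/boundary-ball argument where the paper invokes it as Proposition \ref{pro:consistency}, citing \cite{andersen1982cox}.
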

A few remarks are in place for the above theorem.
First, based on the above result, we are now able to develop a new perspective in terms of the relationship between the MLE and CMLE. In particular, the MLE can be understood as an extreme case of the CMLE in which the sample is replicated infinitely many times. Equivalently, we can also conclude that the conditional logistic regression with a large number of data point replications is closely related to the ordinary logistic regression. 
Second, the above theorem indicates that we should be extremely careful with the number of replications when performing conditional logistic regression for survey sampling. As observed in Section \ref{subsec:simu}, the MLE is a biased estimator that is invariant to the  $ R $. Therefore, Theorem \ref{thm:r} indicates that the CMLE can be potentially biased when the chosen number of replications is large. 
Finally, the theorem can be relaxed to the case when only a subsequence of CMLEs $\{\hat{\boldsymbol{\beta}}^c(R)\}_{R=1}^{\infty}$ exist.

Before describing the proof of Theorem \ref{thm:r}, we discuss our strategy of the proof and the technical challenge within. Briefly speaking, the key component of our analysis is the discovery of pointwise convergence of functions $l^c_R(\vc\beta)$ to $l^o(\vc\beta)$ as $R\to\infty$. Theorem \ref{thm:r} then follows immediately by noting that concavity and uniqueness of MLE improves such pointwise convergence to uniform convergence \citep[see, e.g., Theorem II.1 in ][] {andersen1982cox,rockafellar1970convex}. 
Specifically, we can apply Corollary II.2 in \cite{andersen1982cox}, which states the following:
\begin{pro}
	\label{pro:consistency}
	Consider a sequence of finite-valued concave functions $\{M_R(\vc\beta)\}_{R=1}^\infty$ that converge pointwisely to a function $M(\vc\beta)$.
	Suppose that a unique maximizer $\vc\beta^*$ exists for $M(\vc\beta)$, and that $\{\hc\beta_R\}$ is a sequence of maximizers of $M_R(\vc\beta)$. We have $\hc\beta_R\to \vc\beta^*$ when $i\to\infty$. Also, if we change the pointwise convergence assumption to pointwise convergence in probability, i.e., if $\{M_R(\vc\beta)\}_{i=1}^\infty$ are random functions that converge pointwisely in probability to $M(\vc\beta)$, then we have $\hc\beta_R\pto \vc\beta^*$. 
\end{pro}
\noindent The main challenge in our analysis is on the pointwise convergence of $l^c_R(\vc\beta)$ to $l^o(\vc\beta)$ as $R\to\infty$. Specifically, we need to analyze the behavior of the function $g_{j,R,T}(\vc\beta)$ in \eqref{eq:g} as $R\to\infty$, which involves asymptotic analysis of the sum of the multiplication of several combinatorial factors. Fortunately, through an application of the method of steepest descent for integral approximation, we can obtain the following proposition:
\begin{pro}
	\label{pro:fAsymp}
	For any fixed vector $\vc\beta$, positive integer $K$, and points $X_1,\ldots,X_K\in\R^P$, consider function $f:\R^P\to\R$ described by
	\begin{align}
	\label{eq:fR}
	f_{R}(\vc \beta) = \sum_{\substack{r_1,\ldots,r_{K}\in\{0,1,\ldots,R\}\\ \sum_{s=1}^{K}r_s=RT}} {R\choose r_1}\ldots{R\choose r_{K}}\exp\left(\sum_{k=1}^{K}r_k\vc X_{k}^T\vc \beta\right)
	\end{align}
	where $0\le T\le K$ is a fixed positive integer. We have
	\begin{align}
	\label{eq:fAsymp}
	\begin{aligned}
	& e^{-Ru(0)}f_{R}(\vc\beta) = O_p(1)R^{-1/2} + o_p(1)
	\end{aligned}	
	\end{align}	
	and consequently
	\begin{align}
	\label{eq:fAsymplim}
	\lim_{R\to\infty}\frac{1}{R}\log f_{R}(\vc\beta) = u(0).
	\end{align}
	Here $ O_p(1) $ denotes any term that is boudned in probability with respect to $R$, $ o_p(1) $ denotes any terms that converges to $0$ in probability when $ R\to\infty $, and $u(0)$ is a real valued constant evaluated at $\theta=0$ of the following complex-valued function $u(\theta)$:
	\begin{align}
	\label{eq:u}
	u(\theta) = -\tau(\vc\beta)T- i(K-T)\theta + \sum_{k=1}^{K}\log\left(e^{i\theta}+\exp(\vc X_k^T\vc\beta+\tau(\vc\beta)) \right),
	\end{align}	
	in which $i=\sqrt{-1}$ and $\tau(\vc\beta)$ is implicitly defined as the root $\tau$ to the following equation:
	\begin{align}
	\label{eq:b}
	T - \sum_{k=1}^K\frac{\exp(\vc X_k^T\vc\beta+\tau)}{1 + \exp(\vc X_k^T\vc\beta+\tau)} = 0.
	\end{align}	
\end{pro}
\noindent The application of the above proposition to the function $g$ in \eqref{eq:llhdCMLE} yields the pointwise convergence of $l^c_R(\vc\beta)$ to $l^o(\vc\beta)$. To the best of our knowledge, the above result is nontrivial and has not been discovered in the literature. Our proof is inspired by \citep{halasz1976elementary} in which the special case with $T=1$ is proved. The detailed proof will be postponed to Section \ref{sec:asym} in the Appendix.

We are now ready to prove Theorem \ref{thm:r}.
\begin{proof}[Proof of Theorem \ref{thm:r}]
	For any fixed $\vc\beta$ and $j=1,\ldots,J$, applying Proposition \ref{pro:fAsymp} to the description of $l^c_R(\vc\beta)$ in \eqref{eq:llhdCMLE} (with $K=K_j$, $T = \sum_{k=1}^{K_j}Y_{j,k}$, $\vc X_k =\vc X_{j,k}$, $f_{R}(\vc\beta) = g_{j,R,\sum_{k=1}^{K_j}Y_{j,k}}(\vc\beta)$, and $\tau(\vc\beta) = \tau_{j}(\vc\beta)$), we have 
	\begin{align}
	\lim_{R\to\infty}\frac{1}{R}\log g_{j,R,\sum_{k=1}^{K}Y_{j,k}}(\vc\beta) = u(0) := -\tau_j(\vc\beta)\sum_{k=1}^{K_j}Y_{j,k} + \sum_{k=1}^{K_j}\log(1 + \exp(\vc X_{j,k}^T\vc\beta+\tau_{j}(\vc\beta))).
	\end{align}
	Noting the above and the descriptions of $l^o(\vc\beta)$ and $l_{R}^c(\vc\beta)$ in \eqref{eq:llhdMLE} and \eqref{eq:llhdCMLE} respectively, we have
	\begin{align}
	& \lim_{R\to\infty} \left[l^o(\vc\beta) - l_{R}^c(\vc\beta)\right]
	\\
	= & \lim_{R\to\infty}\frac{1}{N}\sum_{j=1}^{J}\left[\sum_{k=1}^{K_j}Y_{j,k}\tau_j(\vc\beta) - \log(1+\exp(\vc X_{j,k}^T\vc \beta + b_{j,K}^o)) + \frac{1}{R}\log g_{j,R,K,\sum_{k=1}^{K}Y_{j,k}}(\vc\beta) \right] = 0.
	\end{align}
	The above result shows the pointwise convergence $l_{R}^c(\vc\beta)\to l^o(\vc\beta)$ as $R\to\infty$. Applying Proposition \ref{pro:consistency} (with $M_R(\vc\beta) = l_{R}^c(\vc\beta)$, $\hc\beta_R= \hc\beta^c_R$, $M(\vc\beta) = l^o(\vc\beta)$, and $\vc\beta^*=\hc\beta^o$ ), we conclude that $\hc\beta^c_R\to \hc\beta^o$ as $R\to\infty$.
\end{proof}

\section{Discussion}\label{sec:discussion}
In this paper, we discuss the performance of both ordinary logistic regression and conditional logistic regression methods when each individual data point is replicated many times. Specifically, the conditional logistic regression is asymptotically equivalent to the ordinary logistic regression for the infinitely replicated sample, i.e. $\lim\limits_{R\rightarrow\infty}\boldsymbol{\hat{\beta}}_R^c=\boldsymbol{\hat{\beta}}^o $. Consequently, when the individual data points are replicated, conditional logistic regression results in a biased estimation. Noting that the replication of data points is in a sense related to the design of sampling weights \citep{he2013equivalence,he2018consistency}, 
our results implies that we should be cautious when working on weight structure to the samples.

\section{Acknowledgment}

Part of Yuyuan Ouyang's research is supported by US Dept. of the Air Force grant FA9453-19-1-0078 and ONR grant N00014-19-1-2295.

\bibliographystyle{chicago}
\bibliography{reference-OLR-CLR-1overM}

\begin{center}
{\large\bf APPENDIX}
\end{center}

\begin{appendix}
	\Appendix    
	\renewcommand{\theequation}{A.\arabic{equation}}
	\renewcommand{\thesubsection}{A.\arabic{subsection}}
	\setcounter{equation}{0}
\end{appendix}

\section{Proof of Proposition \ref{pro:fAsymp}}
\label{sec:asym}
Our goal in this section is to prove Proposition \ref{pro:fAsymp}. The key concept is to notice that the function $f_{R}(\vc\beta)$ is indeed the coefficient of a polynomial and can be transformed to a complex integral by apply Cauchy's differentiation formula. The classical method of steepest descent can then be applied to analyze the integral, yielding the asymptotic estimate of $f_R(\vc\beta)$.

	\begin{proof}[Proof of Proposition \ref{pro:fAsymp}]
		Letting $\xi_k = \exp(\vc X_k^T\vc \beta)$ and computing the coefficients of the polynomial $\prod_{k=1}^{K}(z+\xi_k)^R$ with respect to its variable $z$, we have
		\begin{align}
		\prod_{k=1}^{K}(z+\xi_k)^R = \sum_{l=0}^{KR}\left[\sum_{\substack{r_1,\ldots,r_{K}\in\{0,1,\ldots,R\}\\ \sum_{s=1}^{K}r_s=l}} {R\choose r_1}\ldots{R\choose r_{K}}\xi_1^{r_1}\cdots\xi_K^{r_K}\right]z^{KR-l}.
		\end{align}
		Comparing the above expansion with the definition of $f_{R,K}(\vc\beta)$ in \eqref{eq:fR}, we observe that $f_{R,K}(\vc\beta)$ is indeed the coefficient of $z^{R(K-T)}$ in the above expansion. For any $\rho>0$, by applying Cauchy's differeniation formula to the above polynomial over the disk $\{z:|z|\le\rho\}$ we have
		\begin{align}
		\label{eq:fIntegralh}
		\begin{aligned}
		f_{R}(\vc\beta) = & \frac{1}{2\pi i}\oint_{|z|=\rho}\dfrac{\prod_{k=1}^{K}(z+\xi_k)^R}{z^{R(K-T)+1}}dz
		\\
		= & \frac{1}{2\pi i}\int_{-\pi}^{\pi}(\rho e^{i\theta})^{-R(K-T)-1}\exp\left[R\sum_{k=1}^{K}\log \left(\rho e^{i\theta}+\xi_k\right)\right]\rho ie^{i\theta}~d\theta
		\\
		= & \frac{1}{2\pi}\int_{-\pi}^{\pi}e^{h(\rho ,\theta)}~d\theta,\ \forall \rho>0,
		\end{aligned}
		\end{align}
		where in the last equality by simplifying terms and noting that $\xi_k = \exp(\vc X_k^T\vc \beta)$ we have
		\begin{align}
		\label{eq:h}
		h(\rho ,\theta) = -R(K-T)\log \rho  - iR(K-T)\theta + R\sum_{k=1}^{K}\log(\rho e^{i\theta} + \exp(\vc X_k^T\vc\beta)).
		\end{align}
		
		In order to analyze the asymptotic behavior of $f_{R}(\vc\beta)$ in the integral form \eqref{eq:fIntegralh}, we apply the classical method of steepest descent (see, e.g., \cite{wong2001asymptotic}) to the integral. In particular, noting that \eqref{eq:fIntegralh} holds for any $\rho>0$, the method of steepest descent suggests a specific value of $\rho=e^{-\tau(\vc\beta)}$ where $\tau(\vc\beta)$ is defined by equation \eqref{eq:b}, so that $\partial_{\theta}h(\rho ,0)=0$. Indeed, we can verify that when $\rho=e^{-\tau(\vc\beta)}$, using the definition of $\tau(\vc\beta)$ in \eqref{eq:b} we have
		\begin{align}
		\label{eq:partialh}
		\begin{aligned}
		& \partial_{\theta}h\left(e^{-\tau(\vc\beta)} ,0\right) = -iR(K-T) + iR\sum_{k=1}^{K}\frac{e^{-\tau(\vc\beta)}}{e^{-\tau(\vc\beta)}  + \exp(\vc X_k^T\vc\beta)} 
		\\
		=& iRT + iR\sum_{k=1}^{K}\left(1-\frac{1}{1  + \exp(\vc X_k^T\vc\beta + \tau(\vc\beta))}\right) = 0.
		\end{aligned}
		\end{align}		
		Setting $\rho=e^{-\tau(\vc\beta)}$ in \eqref{eq:fIntegralh} and noting the description of $h(\rho,\theta)$ in \eqref{eq:h} we obtain
		\begin{align}
		\label{eq:fIntegral}
		f_{R}(\beta) = \frac{1}{2\pi}\int_{-\pi}^{\pi}e^{Ru(\theta)}~d\theta,
		\end{align}
		where $u(\theta) =  h(e^{-\tau(\vc\beta)},\theta)/R $. It is easy to verify that $u(\theta)$ has form \eqref{eq:u}.
		Note that with the specific choice $\rho=e^{-\tau(\vc\beta)}$ we have $u'(0)=\partial_{\theta}h\left(e^{-\tau(\vc\beta)} ,0\right)/R=0$.
		
		To study the behavior of the integral \eqref{eq:fIntegral} when $R\to\infty$, we separate it to two parts with
		\begin{align}
		I_1 := \frac{1}{2\pi}\int_{-\delta}^{\delta}e^{Ru(\theta)}~d\theta\text{ and }I_2 := \frac{1}{2\pi}\int_{[-\pi,\pi]\backslash[-\delta,\delta]}e^{Ru(\theta)}~d\theta,
		\end{align}
		where $\delta\in(0,\pi/2)$ is a constant that will be determined later. We will analyze integrals $I_1$ and $I_2$ separately. 
		
		The analysis of $I_1$ requires the following Taylor expansion of $u(\theta)$  at $\theta=0$:
		\begin{align}
		\label{eq:uTaylor}
		\begin{aligned}
		u(\theta) = & u(0) + u'(0)\theta + \frac{1}{2}u''(0)\theta^2 + \frac{1}{6}u'''(\theta^*)\theta^3 =  u(0) - O_p(1)\theta^2 + O_p(1)\delta^3.
		\end{aligned}
		\end{align}		
		Here $\theta^*\in[-\delta,\delta]$ is used in the remainder term of the Taylor expansion. The last equality is since $u'(0)=0$ and the observation that $u(\theta)$ does not depend on $R$.
%
		Applying the reformulation of $u(\theta)$ in \eqref{eq:uTaylor} to the definition of $I_1$ we have
		\begin{align}
		\label{eq:I1result}
		\begin{aligned}
		I_1 = & \frac{1}{2\pi}e^{Ru(0)+O_p(1)R\delta^3}\int_{-\delta}^{\delta}\exp\left(-O_p(1)R\theta^2\right)d\theta 
		\\
		= & \frac{1}{2\pi}e^{Ru(0)+O_p(1)R \delta^3}\left[\int_{-\infty}^{\infty}\exp\left(-O_p(1)R\theta^2\right)d\theta - 2\int_{\delta}^{\infty}\exp\left(-O_p(1)R\theta^2\right)d\theta\right]
		\\
		= & \frac{1}{2\pi}e^{Ru(0)+O_p(1)R\delta^3}
		\left[
		O_p(1)R^{-1/2}
		- \int_{\delta}^{\infty}\exp\left(-O_p(1)R\theta^2\right)d\theta
		\right]
		\\
		= & e^{Ru(0)+O_p(1)R\delta^3}
		\left[
		O_p(1)R^{-1/2}
		-O_p(1)R^{-1}\delta^{-1}\exp\left(-O_p(1)R\delta^2\right)
		\right].
		\end{aligned}		
		\end{align}	
		Here the last equality is due to the boundedness result
		\begin{align}
		\int_{\delta}^{\infty}\exp\left(-O(1)R\theta^2\right)d\theta \le & \int_{\delta}^{\infty}\exp\left(-O(1)R\delta\theta\right)d\theta =  O(1)R^{-1}\delta^{-1}\exp\left(-O(1)R\delta^2\right).
		\end{align}	
		
		Let us continue to the integral $I_2$. Noting the description of $u(\theta)$ and $u(0)$ (see \eqref{eq:u}) we have that for any $\theta\in [-\pi,\pi]\backslash[-\delta,\delta]$ where $0<\delta<\pi$, 
		\begin{align}
		|e^{Ru(\theta)}| = & e^{Ru(0)}\left|e^{-iR(K-T)\theta}\right|\prod_{k=1}^K\left|\frac{e^{i\theta}+\exp(\vc X_k^T\vc\beta+\tau(\vc\beta))}{1 + \exp(\vc X_k^T\vc\beta+\tau(\vc\beta))}\right|^R
		\\
		= & e^{Ru(0)}\prod_{k=1}^K\left(\frac{1+2\exp(\vc X_k^T\vc\beta+\tau(\vc\beta))\cos\theta + \exp(2\vc X_k^T\vc\beta+2\tau(\vc\beta))}{1 + 2\exp(\vc X_k^T\vc\beta+\tau(\vc\beta))+\exp(2\vc X_k^T\vc\beta+2\tau(\vc\beta))}\right)^{R/2}
		\\
		\le & e^{Ru(0)}\prod_{k=1}^K\left(\frac{1+2\exp(\vc X_k^T\vc\beta+\tau(\vc\beta))\cos\delta + \exp(2\vc X_k^T\vc\beta+2\tau(\vc\beta))}{1 + 2\exp(\vc X_k^T\vc\beta+\tau(\vc\beta))+\exp(2\vc X_k^T\vc\beta+2\tau(\vc\beta))}\right)^{R/2}.
		\end{align}
		Using the inequalities $\cos\delta \le 1- \delta^2/6$ and $\log (1+x)\le x$, we have
		\begin{align}
		|e^{Ru(\theta)}|\le & e^{Ru(0)}\prod_{k=1}^K\left|1 - \frac{\exp(\vc X_k^T\vc\beta+\tau(\vc\beta))\delta^2}{3(1 + 2\exp(\vc X_k^T\vc\beta+\tau(\vc\beta))+\exp(2\vc X_k^T\vc\beta+2\tau(\vc\beta)))}\right|^{R/2}
		\\
		= & e^{Ru(0)}\exp\left[\frac{R}{2}\sum_{k=1}^{K}\log\left(1 - \frac{\exp(\vc X_k^T\vc\beta+\tau(\vc\beta))\delta^2}{3(1 + 2\exp(\vc X_k^T\vc\beta+\tau(\vc\beta))+\exp(2\vc X_k^T\vc\beta+2\tau(\vc\beta)))}\right)\right]
		\\
		\le & e^{Ru(0)}\exp\left[-\frac{R}{2}\sum_{k=1}^{K}\frac{\exp(\vc X_k^T\vc\beta+\tau(\vc\beta))\delta^2}{3(1 + 2\exp(\vc X_k^T\vc\beta+\tau(\vc\beta))+\exp(2\vc X_k^T\vc\beta+2\tau(\vc\beta)))}\right]
		\\
		\le & e^{Ru(0)}\exp\left(-\frac{RK\delta^2}{12}\right),
		\end{align}
		hence
		\begin{align}
		\label{eq:I2result}
		I_2\le \frac{1}{2\pi}\int_{[-\pi,\pi]\backslash[-\delta,\delta]}|e^{Ru(\theta)}|~d\theta \le \frac{1}{2\pi}\int_{-\pi}^{\pi}|e^{Ru(\theta)}|~d\theta \le e^{Ru(0)} \exp\left(-\frac{RK\delta^2}{12}\right).
		\end{align}	
		
		Summarizing \eqref{eq:I1result} and \eqref{eq:I2result}, we conclude that for any $\delta\in(0,\pi/2)$,
		\begin{align}
		\label{eq:fRu0relation}
		& e^{-Ru(0)}f_{R}(\vc\beta) 
		\\
		=& e^{O_p(1)R\delta^3}
		\left[
		O_p(1)R^{-1/2}
		-O_p(1)R^{-1}\delta^{-1}\exp\left(-O_p(1)R\delta^2\right)
		\right]
		+ \exp\left(-\frac{RK\delta^2}{12}\right).
		\end{align}
		Specifically, setting $\delta = R^{-1/3}$ in the above relation, we conclude that $e^{-Ru(0)}f_{R}(\vc\beta) = O_p(1)R^{-1/2} + o_p(1)$. Consequently,
		\begin{align}
			\lim_{R\to\infty}\frac{1}{R}\log f_{R}(\vc\beta) - u(0) = \lim_{R\to\infty}\frac{1}{R}\log \left[e^{-Ru(0)}f_{R}(\vc\beta)\right] = \lim_{R\to\infty}\frac{1}{R}\log \left[O_p(1)R^{-1/2} + o_p(1)\right] = 0.
		\end{align}
	\end{proof}



\end{document}